 \newcommand{\ssymbol}[1]{^{\@fnsymbol{#1}}}
 		\title{Tensor  GMRES and  Golub-Kahan Bidiagonalization methods via  the  Einstein product with applications to image and  video processing}
 			\author{ M. El Guide\thanks{Centre for Behavioral Economics and Decision Making(CBED), FGSES, Mohammed VI Polytechnic University, Green City, Morocco.}  \and  A. El Ichi \footnotemark[4] \thanks{Department of Mathematics University Mohammed V Rabat, Morocco} \and   K. Jbilou\footnotemark[1] \thanks{LMPA, 50 rue F. Buisson, ULCO Calais, France; jbilou@univ-littoral.fr } \and F.P.A. Beik\thanks{Department of Mathematics, Vali-e-Asr University of Rafsanjan,\ P.O. Box 518, Rafsanjan, IranA.} }
\begin{document}
 		 	
 		 	\maketitle


 		\begin{abstract}    
 			In the present paper, we are interested in developing  iterative Krylov subspace methods in tensor structure to solve a class of multilinear systems via Einstein product. In particular, we develop global variants of the GMRES and Gloub--Kahan bidiagonalization processes in tensor framework. We further consider the case that mentioned equation may be possibly corresponds to a discrete ill-posed problem. Applications arising from color image and video restoration are included. 		
 			\end{abstract}

 	{\bf keywords}:
 			{Arnoldi process, Golub--Kahan, ill-posed problem bidiagonalization, tensor equation, Einstein product, Video processing.}

 	\medskip 
 	
 	
 	\section{Introduction}
 	In this paper, we are interested in approximating the solution of  following tensor equation
 	\begin{equation}\label{eq1}
 \mathcal A \ast_N \mathcal X = \mathcal C,
 	\end{equation}
 	where  $ \mathcal A\in \mathbb{R}^{I_1 \times  \ldots \times I_N \times I_1 \times \ldots \times I_N}$ and  $ \mathcal C\in \mathbb{R}^{I_1 \times  \ldots \times I_N \times J_1 \times \ldots \times J_M}$ are known and $ \mathcal X\in \mathbb{R}^{I_1 \times  \ldots \times I_N \times J_1 \times \ldots \times J_M}$is an unknown tensor to be determined.
 We can also consider the  least-squares problem
 	\begin{equation*}
 \displaystyle \min \Vert \mathcal A \ast_N \mathcal X-\mathcal C \Vert_F.
 \end{equation*}	

\noindent	Tensor equations  arise
in many application of modern sciences, e.g., engineering \cite{qllz}, signal processing \cite{lb},  data mining \cite{lxnm}, tensor complementarity problems\cite{lzqlxn}, computer vision\cite{vt1, vt2} and  as a result have been extensively studied in the literature \cite{comon,smile, kroon}. The most recent tensor approaches used  for numerically
solving PDEs have been investigated in \cite{dwwm}. For those applications, we have to take advantage of this multidimensional structure to build rapid and robust methods for solving the related problems.
For an extensive literature on tensors one can see  for example the good papers in  \cite{kimler1, kolda1}. Over the years many specialized methods for solving tensor problems of type (\ref{eq1}) have been developed, see e.g. \cite{huang1} for tensor forms of the Arnoldi and Lanczos processes for well-posed problems. Huang et al. \cite{huang1} pointed out that tensor equations of the form \eqref{eq1} appear in continuum physics, engineering, isotropic and anisotropic elastic models. Multilinear systems of the form \eqref{eq1} may also arise from discretization of the high-dimensional Poisson problem using finite difference approximations \cite{brazell,huang1}. 

%

In the current paper, we are interested in  developing robust and fast iterative Krylov subspace methods via Einstein product to solve regularized problems originating from color image and  video processing applications. Standard and global Krylov subspace methods are suitable  when dealing with grayscale images, e.g, \cite{belguide, belguide2, reichel1, reichel2},  while  Krylov subspace methods can handle similar applications when the blurring linear operator can be decomposed in Kroncker product of two matrices; see \cite{belguide, belguide2}. However, much work has to be done to numerically solve problems related to multi channel images (e.g. color images, hyper-spectral images and videos).  We show that modelling these problems in the form of tensor equation (\ref{eq1}) make it possible to develop iterative Krylov subspace methods more appealing and allows to significantly reduce the overall computational complexity.

The remainder of paper is organized as follows: We shall first in Section \ref{sec:section2} by presenting some symbols and notations used throughout paper.  Section \ref{sec3} includes reviewing the adaptation of Tikhonov regularization for tensor equation (\ref{eq1}). Then we propose GMRES and Global Golub--Kahan methods via Einstein in conjunction with Tikhonov regularization.  On the basis of Point Spread Function (PSF),  in Section \ref{sec4}, we propose a tensor formulation in the form of (\ref{eq1}) that describes the blurring of color image and video processing. Numerical examples  are reported on restoring blurred and noisy color images and videos. Concluding remarks can be found in Section \ref{sec5}.

\section{Definitions and Notations}\label{sec:section2}   In this section, we briefly review some concepts and notions
that are used throughout the paper. 	A \textit{tensor} is a multidimensional array of data and a natural extension of scalars, vectors and matrices to a higher order, a scalar is a $0^{th}$\textbf{ order} tensor, a vector is a  $1^{th}$\textbf{ order} tensor and a matrix is   $2^{th}$\textbf{ order} tensor. The tensor \textbf{ order}  is  the number of its indices, which is called \textit{modes} or \textit{ways}. For a given N-mode tensor $  \mathcal {X}\in \mathbb{R}^{I_{1}\times I_{2}\times I_{3}\ldots \times I_{N}}$,
 the notation $x_{i_{1},\ldots,i_{N}}$ (with $1\leq i_{j}\leq I_{j},\; j=1,\ldots N $) stands for element $\left(i_{1},\ldots,i_{N} \right) $ of the tensor $\mathcal {X}$. Corresponding to a given tensor $ \mathcal {X}\in \mathbb{R}^{I_{1}\times I_{2}\times I_{3}\ldots \times I_{N}}$, the notation 
\[\mathop{\mathcal{X}_{\underbrace{::\cdots:}k}}\limits_{\tiny{(N-1)-times}},\quad k=1,2,\ldots,I_N,\]
denotes a tensor in $\mathbb{R}^{I_{1}\times I_{2}\times I_{3}\ldots \times I_{N-1}}$ which is obtained by fixing the last index and is called frontal 
slice; see \cite{kimler1,kolda1} for more details.
 Throughout this work, vectors and matrices are respectively denoted by
lowercase and capital letters, and tensors of  higher order  are represented by  calligraphic   letters.
\medskip

We first recall the definition of $n$-mode tensor product with a matrix; see \cite{kolda1} .

\begin{definition}
The $n$-mode product of the tensor 	$\mathcal {A}=[a_{i_1i_2 \ldots i_n} ]  \in \mathbb{R}^{I_{1} \times I_2\times \ldots \times I_N}$ and the matrix $U=[u_{ji_n}] \in \mathbb{R}^{J \times I_n}$ is denoted by $\mathcal {A} \times_n U$ is a tensor of order $I_1\times I_2 \times \ldots \times I_{n-1} \times J \times I_{n+1} \times \ldots \times I_N$ and its entries are defined by
\begin{equation*}
(\mathcal {A} \times_n U)_{i_1i_2\ldots i_{n-1}ji_{n+1}\ldots i_N}= \displaystyle \sum_{i_n=1}^{I_N} a_{i_1i_2\ldots i_N} u_{ji_n}
\end{equation*}
\end{definition}

\medskip 

\noindent The $n$-mode product of the tensor $\mathcal {A} \in \mathbb{R}^{I_{1} \times I_2\times \ldots \times I_N}$ with the vector $v=[v_{i_n}] \in \mathbb{R}^{I_n}$  is an $(N-1)$-mode tensor denoted by $\mathcal {A} \bar {\times} v$ whose elements are given by 
$$ (\mathcal {A} \bar {\times} v )_{i_1\ldots i_{n-1}i_{n+1}\ldots i_N}= \displaystyle \sum_{i_n}{}x_{i_1i_2\ldots i_N} v_{i_n}.$$ 

\medskip 


 \noindent 	Next, we recall the definition and some properties  of the tensor Einstein product which is an  extension of the matrix product; for more details see \cite{brazell}
 	
 	\begin{definition}\cite{einstein}
 	
 	Let $\mathcal {A} \in \mathbb{R}^{I_{1}\times I_{2}\times \ldots \times I_{L}\times   K_{1}\times K_{2}\times \ldots \times K_{N}} $, $\mathcal {B} \in \mathbb{R}^{K_{1}\times K_{2}\times \ldots \times K_{N}\times J_{1}\times J_{2}\times \ldots \times J_{M}} $, the Einstein product of tensors $\mathcal {A}$ and $\mathcal {B}$ is a tensor of size   $  \mathbb{R}^{I_{1}\times I_{2}\times \ldots \times I_{L}\times J_{1}\times J_{2}\times \ldots \times J_{M}} $ whose elements are defined by
 	$$ (\mathcal {A}\ast_{N}\mathcal {B})_{i_{1}\ldots i_{L} j_{1} \ldots j_{M}}=       \sum_{k_{1},\ldots,k_{N}}a_{i_{1}\ldots i_{L} k_{1} \ldots k_{N} }b_{k_{1} \ldots k_{N}j_{1} \ldots j_{M}}. $$
 	
 	\end{definition} 
 	
 	\noindent Given a tensor $\mathcal {A} \in \mathbb{R}^{I_{1}\times I_{2}\times \ldots \times I_{N}\times   J_{1}\times J_{2}\times \ldots \times J_{M}} $, the tensor  
 	$\mathcal {B} \in \mathbb{R}^{J_{1}\times J_{2}\times \ldots \times J_{M}\times   I_{1}\times I_{2}\times \ldots \times I_{N}} $ the transpose of  $\mathcal {A}$,
 	if $b_{i_1\ldots i_Mj_1 \ldots j_m}= a_{j_1 \ldots j_Ni_1 \ldots i_M}$. We denote the transpose of  $\mathcal {A}$ by $\mathcal {A}^T$.

   \noindent  A tensor $\mathcal{D}=[d_{i_{1},\ldots,i_{M},j_{1},\ldots,j_{N}}]\in \mathbb{R}^{I_1 \times \cdots \times I_N\times J_1 \times \cdots \times J_N}$ is said to be diagonal  if all of its entries are equal to zero except for $d_{i_1\ldots i_Ni_1\ldots i_N}$. In the case $d_{i_1\ldots i_Ni_1\ldots i_N}=1$, the tensor $\mathcal{D}$ is called diagonal and denoted by $\mathcal{I}_N$. We  further use the notation $\mathcal{O}$ for a the tensor having all its entries equal to zero.

   \begin{definition}
 Let  $\mathcal{A} \in \mathbb{R}^{I_{1}\times I_{2}\times \ldots \times I_{N}\times   I_{1}\times I_{2}\times \ldots \times I_{N}} $. The tensor  $\mathcal{A}$ is invertible if there exists a tensor $\mathcal {X} \in  \mathbb{R}^{I_{1}\times I_{2}\times \ldots \times I_{N}\times   I_{1}\times I_{2}\times \ldots \times I_{N}} $ such that
 	$\mathcal{A}\ast_{N}\mathcal {X}=\mathcal{X}\ast_{N}\mathcal {A}=\mathcal {I}_N.$
 \end{definition}
	
 \medskip
 \noindent The trace of an even-order  tensor $\mathcal{A}\in \mathbb{R}^{I_{1}\times I_{2}\times I_{3}\ldots \times I_{N}\times   I_{1}\times I_{2}\times I_{3}\ldots \times I_{N}}$ is given by 
$$
  tr(\mathcal{A})=\sum_{i_{1} \ldots i_{N}} a_{i_{1} \ldots i_{N} i_{1} \ldots i_{N}}.
$$

\begin{definition}
The inner product of two same size tensors $\mathcal{X}, \mathcal{Y} \in \mathbb{R}^{I_1\times I_2\times \cdots \times I_N}$ is defined by
\begin{equation*}
\left\langle {\mathcal{X}, \mathcal{Y}} \right\rangle  = \sum\limits_{i_1  = 1}^{I_1 } {\sum\limits_{i_2  = 1}^{I_2 } {\ldots \sum\limits_{i_N  = 1}^{I_N } {x_{i_1 i_2 \cdots i_N }^{} } } } y_{i_1 i_2 \cdots i_N }^{}.
 	\end{equation*}
Notice that for even order tensors $\mathcal {X},\mathcal {Y}\in \mathbb{R}^{I_{1}\times I_{2}\times I_{3}\ldots \times I_{N}\times   J_{1}\times J_{2}\times J_{3}\ldots \times J_{M}}$, we have
 	\begin{equation*}
 	\langle \mathcal {X},\mathcal {Y} \rangle=tr(\mathcal {X}^{T}\ast_{N}\mathcal {Y}) 
 	\end{equation*}
 	where $\mathcal {Y}^{T}\in \mathbb{R}^{J_{1}\times J_{2}\times J_{3}\ldots \times J_{M}\times I_{1}\times I_{2}\times I_{3}\ldots \times I_{N}}$ denote de transpose of $\mathcal {Y}.$ \\
 The Frobenius norm of the tensor  $\mathcal {X}$ is given by 	
 \begin{equation}\label{inorm11}
   ||\mathcal {X}||_F=\left\langle {\mathcal{X}, \mathcal{Y}} \right\rangle= \displaystyle \sqrt{tr(\mathcal {X}^{T}\ast_{N}\mathcal {X}) }.
    	\end{equation}
 		\end{definition}
 	
%
%

\noindent The two tensors   $\mathcal {X},\mathcal {Y}\in \mathbb{R}^{I_{1}\times I_{2}\times \ldots \times I_{N}\times   J_{1}\times J_{2}\times \ldots \times J_{M}}$ are orthogonal iff 
 $	\langle \mathcal {X},\mathcal {Y} \rangle=0$.\\
 
%
%

\medskip
\noindent In \cite{beik1}, the $\boxtimes^N$ product between $N$-mode tensors $\mathcal{X}\in \mathbb{R}^{I_1\times I_2 \times \cdots \times I_{N-1} \times  I_N}$ and $\mathcal{Y}\in \mathbb{R}^{I_1\times {I}_2 \times \cdots \times I_{N-1} \times \tilde{I}_N}$ is defined as an $I_N \times \tilde{I}_N$ matrix whose $(i,j)$-th entry is
\[
[\mathcal{X} \boxtimes^N \mathcal{Y}]_{ij}=\text{tr} (\mathcal{X}_{{::\dots:}i} \boxtimes^{N-1} \mathcal{Y}_{{::\dots:}j}),\qquad N=3,4,\ldots,
\]
where
\[\mathcal{X} \boxtimes^2 \mathcal{Y}= \mathcal{X}^T \mathcal{Y}, \qquad \mathcal{X}\in \mathbb{R}^{I_1\times I_2}, \mathcal{Y}\in \mathbb{R}^{I_1\times \tilde{I}_2}.\]
\noindent  Basically, the product $\mathcal{X} \boxtimes^N \mathcal{Y}$ is the contracted product of $N$-mode tensors $\mathcal{X}$ and $\mathcal{Y}$ along the first $N-1$ modes.\\
It is immediate to see that for $\mathcal{X}, \mathcal{Y} \in \mathbb{R}^{I_1\times I_2 \times \cdots \times I_N}$, we have
\begin{equation*}
\left\langle {\mathcal{X}, \mathcal{Y}} \right\rangle =\text{tr}(\mathcal{X} \boxtimes^N \mathcal{Y}),\qquad N=2,3,\ldots,
\end{equation*}
and
\[\left\| \mathcal{X} \right\|^2= \text{tr} (\mathcal{X} \boxtimes^N \mathcal{X})=\mathcal{X} \boxtimes^{(N+1)} \mathcal{X},\]
for $\mathcal{X}\in \mathbb{R}^{I_1\times I_2 \times \cdots \times I_N}$.\\ We end  the current subsection by recalling the following useful proposition from \cite{beik1}.
\begin{proposition} \label{p2} Suppose that $\mathcal{B}\in \mathbb{R}^{I_1\times I_2 \times \cdots \times I_N\times m}$ is an $(N+1)$-mode tensor with the column tensors $\mathcal{B}_1,\mathcal{B}_2,\ldots,\mathcal{B}_m\in \mathbb{R}^{I_1\times I_2 \times \cdots \times I_N}$ and
	$z=(z_1,z_2,\ldots,z_m)^T\in \mathbb{R}^m$. For an arbitrary $(N+1)$-mode tensor $\mathcal{A}$ with $N$-mode column tensors $\mathcal{A}_1,\mathcal{A}_2,\ldots,\mathcal{A}_m$, the following statement holds
	\begin{equation}
	\mathcal{A} \boxtimes^{(N+1)} (\mathcal{B} \bar{\times}_{_{N+1}} z) = (\mathcal{A} \boxtimes^{(N+1)} \mathcal{B}) z.
	\end{equation}
\end{proposition}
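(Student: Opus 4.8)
The plan is to reduce both sides of the claimed identity to the same explicit list of $m$ inner products, using the recursive slice-wise definition of $\boxtimes$ together with the two identities recorded just above the statement, namely $\langle \mathcal{X},\mathcal{Y}\rangle=\text{tr}(\mathcal{X}\boxtimes^N\mathcal{Y})$ and the frontal-slice expansion of $\boxtimes$.

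First I would unwind the term $\mathcal{B}\bar{\times}_{N+1}z$. By the definition of the $(N+1)$-mode product with a vector, its entries are $(\mathcal{B}\bar{\times}_{N+1}z)_{i_1\ldots i_N}=\sum_{k=1}^m b_{i_1\ldots i_N k}\,z_k$; since the $k$-th frontal slice of $\mathcal{B}$ along its last mode is exactly the column tensor $\mathcal{B}_k$, this says $\mathcal{B}\bar{\times}_{N+1}z=\sum_{k=1}^m z_k\,\mathcal{B}_k$, an $N$-mode tensor in $\mathbb{R}^{I_1\times\cdots\times I_N}$. Next I would make explicit what $\mathcal{A}\boxtimes^{(N+1)}(\cdot)$ does to an $N$-mode tensor $\mathcal{Y}\in\mathbb{R}^{I_1\times\cdots\times I_N}$: viewing $\mathcal{Y}$ as an $(N+1)$-mode tensor with trailing dimension $1$, the defining formula gives that the $i$-th component of $\mathcal{A}\boxtimes^{(N+1)}\mathcal{Y}$ equals $\text{tr}(\mathcal{A}_i\boxtimes^N\mathcal{Y})=\langle\mathcal{A}_i,\mathcal{Y}\rangle$, where $\mathcal{A}_i$ is the $i$-th $N$-mode column tensor of $\mathcal{A}$. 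Applying the same computation slice by slice to $\mathcal{B}$ shows that the $(i,k)$-entry of the $m\times m$ matrix $\mathcal{A}\boxtimes^{(N+1)}\mathcal{B}$ is $\langle\mathcal{A}_i,\mathcal{B}_k\rangle$.

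With these two observations, the identity follows from bilinearity of the inner product: the $i$-th component of the left-hand side is
\[
\big\langle \mathcal{A}_i,\ \textstyle\sum_{k=1}^m z_k\mathcal{B}_k\big\rangle=\sum_{k=1}^m z_k\langle\mathcal{A}_i,\mathcal{B}_k\rangle=\sum_{k=1}^m \big[\mathcal{A}\boxtimes^{(N+1)}\mathcal{B}\big]_{ik}\,z_k,
\]
which is precisely the $i$-th component of $(\mathcal{A}\boxtimes^{(N+1)}\mathcal{B})z$. Since $i\in\{1,\ldots,m\}$ was arbitrary, the two vectors agree.

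The argument involves no genuine obstacle; the only point needing care is bookkeeping — interpreting $\boxtimes^{(N+1)}$ correctly when one argument is an $N$-mode tensor (equivalently, an $(N+1)$-mode tensor with last dimension $1$), and identifying the last-mode frontal slices of $\mathcal{B}$ and $\mathcal{A}$ with the column tensors $\mathcal{B}_k$ and $\mathcal{A}_i$. If one prefers to bypass the ``trailing dimension $1$'' device altogether, the same conclusion drops out of full index notation: both sides have $i$-th entry $\sum_{i_1,\ldots,i_N}\sum_{k=1}^m a_{i_1\ldots i_N i}\,b_{i_1\ldots i_N k}\,z_k$, and the equality is merely an interchange of finite sums.
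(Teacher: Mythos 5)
Your proof is correct. Note that the paper does not actually prove this proposition --- it is recalled without proof from the cited reference \cite{beik1} --- so there is no in-paper argument to compare against; your direct verification (identifying $\mathcal{B}\bar{\times}_{N+1}z=\sum_k z_k\mathcal{B}_k$, computing $[\mathcal{A}\boxtimes^{(N+1)}\mathcal{B}]_{ik}=\langle\mathcal{A}_i,\mathcal{B}_k\rangle$, and invoking bilinearity, or equivalently the interchange of finite sums in index notation) is precisely the standard justification, and your handling of the one delicate point, reading $\boxtimes^{(N+1)}$ against an $N$-mode tensor as a trailing-dimension-$1$ slice and matching last-mode slices with the column tensors, is sound.
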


\section{Krylov subspace methods via Einstein product}\label{sec3}

In this section,  we recall the tensor global Arnoldi and propose iterative methods based on Global Arnoldi and Global Golub--Kahan bidiagonlization (GGKB) combined with Tikhonov regularization that are applicable to the restoration of a color images and videos from an available blur- and noise-contaminated versions.

 \subsection{Tikhonov regularization}
 Many applications require the solution of several ill-conditioning systems of equations of the form (\ref{eq1})
 with a right hand side contaminated by an additive error,
 \begin{equation}\label{eq1err}
 \mathcal{A}\ast_N\mathcal X = \mathcal C+\mathcal E,
 \end{equation}
 where $\mathcal E$ is the matrix of error terms that may stem from measurement and discretization errors. An ill-posed tensor equation may appear in color image restoration, video restoration, and when solving some partial differential equations in several space dimensions. In order to diminish the effect of the noise in the data, we replace the original problem  by a stabilized one. One of the most popular regularization
 methods is due to Tikhonov \cite{tikhonov}. 
 Tikhonov regularization problem to solve \eqref{eq1err} is given by
 \begin{equation}\label{Tikhrd}
 \mathcal{X}_\mu=\text{arg}\underset{ \mathcal {X}}{ \text{min}} \left(\| \mathcal {A} \ast_N  \mathcal {X}  -\mathcal C\|_{F}^2+\mu\|\mathcal{X}\|_F^2 \right),
 \end{equation}
 The choice of $\mu$ affects how sensitive $\mathcal{X}_\mu$ is to the error $\mathcal{E}$ in the contaminated right-hand side. Many techniques for choosing a suitable value of $\mu$ have been analyzed and illustrated in the literature; see, e.g., \cite{wahbagolub} and references therein. In this paper we use the discrepancy principle and the Generalized Cross Validation (GCV) techniques.

\subsection{Global GMRES method via Einstein product}
 Let $\mathcal {A} \in \mathbb{R}^{I_{1}\times \ldots \times I_{N}\times   I_{1}\times \ldots \times I_{N}}$ be a square tensor and   $\mathcal {V} \in \mathbb{R}^{I_{1}\times I_{2}\times \ldots \times I_{N}\times   J_{1}\times K_{2}\times \ldots \times J_{M}}$. The $m$-th tensor Krylov subspace is defined by
\begin{equation}\label{krylov1}
\mathcal {K}_m(\mathcal {A},\mathcal {V})=span\{\mathcal {V},\mathcal {A},\mathcal {V},\ldots,\mathcal {A}^{m-1}(\mathcal {V})) \},
\end{equation}
where $\mathcal {A}^i(\mathcal {V})=\mathcal {A} (\mathcal {A}^{i-1}(\mathcal {V}))$. 
The global Arnoldi process for matrix case was proposed in \cite{jbilou1}.
The algorithm for constructing orthonormal basis of \eqref{krylov1} can be given  as follows: (see \cite{beik1,huang1,jbilou1})
 
 \begin{algorithm}
 	\caption{Global Arnoldi process via Einstein product}\label{alg1}
 	\begin{enumerate}
 	
 	\item Inputs: A tensor $\mathcal {A} \in \mathbb{R}^{I_{1}\times I_{2}\times \ldots \times I_{N}\times   I_{1}\times K_{2}\times \ldots \times K_{N}}$, and a tensor   $\mathcal {V} \in \mathbb{R}^{I_{1}\times I_{2}\times \ldots \times I_{N}\times   J_{1}\times K_{2}\times \ldots \times J_{M}}$ and the integer $m$.
 	\item Set $\beta=\Vert \mathcal {V} \Vert_F$ and $\mathcal {V}_1=\mathcal {V} /\beta$.
 	\item For $j=1,\ldots,m$
 	\item $\mathcal {W}=\mathcal {A} \ast_N\mathcal {V}_j$
 	\item for $ i=1,\ldots,j$.
 	\begin{itemize}
 	\item $h_{ij}=\langle \mathcal {V}_i,\mathcal {W} \rangle$,
 	\item $\mathcal {W}= \mathcal {W}-h_{ij}\mathcal {V}_i$
 	\end{itemize}
 \item endfor
 \item $h_{j+1,j}=\Vert \mathcal {W} \Vert_F$. If $h_{j+1,j}=0$, stop; else
 \item $\mathcal {V}_{j+1}=\mathcal {W}/h_{j+1j}$.
 \item EndFor
 	
 \end{enumerate}
\end{algorithm}

\medskip 
\noindent Let $\widetilde {H}_m$ be the upper $(m+1 \times m)$ Hessenberg   matrix whose entries are the $h_{ij}$ from Algorithm \ref{alg1} and let $H_m$ be the matrix obtained from $\widetilde {H}_m$ by deleting the last row. Then, it is not difficult to verify that the
 $\mathcal {V}_i$'s obtained from Algorithm \ref{alg1} form an orthonormal basis of the tensor Krylov subspace $\mathcal {K}_m(\mathcal {A},\mathcal {V})$. Analogous to \cite{beik1,jbilou1}, we can prove the following proposition.
 
 \begin{proposition}
 	Let $\mathbb{V}$ be the $(M+N+1)$-mode tensor with frontal slices $\mathcal {V}_1,\mathcal {V}_2,\ldots,\mathcal {V}_m$ and $\mathbb{W}_m$ be the $(M+N+1)$-mode tensor with frontal slices $\mathcal {A} \ast_N \mathcal {V}_1,\ldots,\mathcal {A} \ast_N \mathcal {V}_m$. Then
 	\begin{eqnarray}
 	 \mathbb{W}_m& = &\mathbb{V}_{m+1} \times_{(M+N+1)} {\widetilde H}_m^T\label{lin1}\\
 \nonumber	& = &\mathbb{V}_m \times_{(M+N+1)}  H_m^T + h_{m+1,m}\, \mathcal{L}\times_{(M+N+1)} E_m, 
 	\end{eqnarray}
 where $E_m=[0,0,\ldots,0,e_m]$ with $e_m$ is the $m$-th column of the identity matrix $I_m$ and $\mathcal{L}$ is an $(M+N+1)-$mode whose frontal slices are all zero except that last one being equal to
 	\end{proposition}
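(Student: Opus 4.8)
The plan is to unwind Algorithm~\ref{alg1} one iteration at a time and then repackage the resulting scalar recurrences as a single identity between $(M+N+1)$-mode tensors. First I would record the Arnoldi recurrence produced by the modified Gram--Schmidt loop: for each fixed $j$, after the inner loop the working tensor is $\mathcal{W}=\mathcal{A}\ast_N\mathcal{V}_j-\sum_{i=1}^{j}h_{ij}\mathcal{V}_i$, and the subsequent normalization sets $\mathcal{V}_{j+1}=\mathcal{W}/h_{j+1,j}$ with $h_{j+1,j}=\|\mathcal{W}\|_F$ (the breakdown case $h_{j+1,j}=0$ being excluded, since the process has not stopped). Rearranging yields
\begin{equation*}
\mathcal{A}\ast_N\mathcal{V}_j=\sum_{i=1}^{j+1}h_{ij}\,\mathcal{V}_i,\qquad j=1,\ldots,m,
\end{equation*}
with the convention $h_{ij}=0$ for $i>j+1$; this zero pattern is precisely what makes $\widetilde H_m$ upper Hessenberg.

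Next I would read off the first equality directly from the definition of the $n$-mode product with a matrix. The $j$-th frontal slice of $\mathbb{V}_{m+1}\times_{(M+N+1)}\widetilde H_m^T$ is $\sum_{i=1}^{m+1}(\widetilde H_m^T)_{ji}\,\mathcal{V}_i=\sum_{i=1}^{m+1}h_{ij}\,\mathcal{V}_i$, which by the displayed recurrence equals $\mathcal{A}\ast_N\mathcal{V}_j$, i.e.\ the $j$-th frontal slice of $\mathbb{W}_m$. Since two tensors with identical frontal slices coincide, \eqref{lin1} follows.

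For the second equality I would split the Hessenberg matrix as $\widetilde H_m=\begin{bmatrix}H_m\\ h_{m+1,m}e_m^T\end{bmatrix}$, hence $\widetilde H_m^T=\begin{bmatrix}H_m^T & h_{m+1,m}e_m\end{bmatrix}$. Since the $n$-mode product is linear in its tensor argument and distributes over this partition of $\widetilde H_m^T$ into column blocks, the block $H_m^T$ acting on the first $m$ frontal slices $\mathcal{V}_1,\ldots,\mathcal{V}_m$ of $\mathbb{V}_m$ produces $\mathbb{V}_m\times_{(M+N+1)}H_m^T$, while the last column $h_{m+1,m}e_m$ multiplies the leftover slice $\mathcal{V}_{m+1}$ and contributes a tensor supported only in the $m$-th frontal position, equal there to $h_{m+1,m}\mathcal{V}_{m+1}$; this is exactly $h_{m+1,m}\,\mathcal{L}\times_{(M+N+1)}E_m$ with $E_m=[0,\ldots,0,e_m]$ and $\mathcal{L}$ the $(M+N+1)$-mode tensor whose frontal slices all vanish except the last one, which equals $\mathcal{V}_{m+1}$. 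Adding the two contributions gives the second line of the proposition.

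The computation is otherwise entirely routine; the only place where care is needed is the index bookkeeping in passing between the $n$-mode product notation and the frontal-slice description — in particular the transpose, which sends the $j$-th \emph{column} of $\widetilde H_m$ to the coefficient vector of the $j$-th output slice — together with checking that the upper-Hessenberg zero pattern lets the inner sums terminate at $i=j+1$ (respectively at $i=m$ in the split form). Note that no orthonormality of the $\mathcal{V}_i$ is needed here: the proposition follows from the recurrence alone, exactly as in the matrix global Arnoldi relation of \cite{jbilou1}.
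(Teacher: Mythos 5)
Your proof is correct and follows exactly the route the paper intends: the paper omits the argument, merely citing that it is analogous to the matrix global Arnoldi relation in \cite{jbilou1} and to \cite{beik1}, and your slice-by-slice unwinding of the recurrence $\mathcal{A}\ast_N\mathcal{V}_j=\sum_{i=1}^{j+1}h_{ij}\mathcal{V}_i$ together with the column-block split of $\widetilde H_m^T$ is precisely that standard argument. You also correctly supply the piece missing from the paper's truncated statement, namely that the last frontal slice of $\mathcal{L}$ is $\mathcal{V}_{m+1}$.
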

 
 \noindent Let $\mathcal {A} \in \mathbb{R}^{I_{1}\times I_{2}\times \ldots \times I_{N}\times   I_{1}\times I_{2}\times \ldots \times I_{N}}$ and $\mathcal {C}  \in \mathbb{R}^{I_{1}\times I_{2}\times \ldots \times I_{N}\times   J_{1}\times J_{2}\times \ldots \times J_{M}}$.  Consider now the linear system of tensor equation 
 \begin{equation}\label{sys1}
 \mathcal {A} \ast_N  \mathcal {X}= \mathcal {C}.
 \end{equation}
 
 \noindent Using Algorithm \ref{alg1}, we can propose the global GMRES method to solve the problem \eqref{sys1}. As for the global GMRES, we seek for an approximate solution $\mathcal {X}_m$, starting from $\mathcal {X}_0$ such that  $\mathcal {X}_m \in \mathcal {X}_0+ \mathcal {K}_m(\mathcal {A},\mathcal {V})$ and by solving the minimization problem
 \begin{equation}
 \label{tgmres1}
 \Vert \mathcal{R}_m \Vert_F= \displaystyle \min_{\mathcal {X} \in \mathcal {X}_0+ \mathcal {K}_m(\mathcal {A},\mathcal {V})}  \Vert  \mathcal {C}-\mathcal {A} \ast_N  \mathcal {X} \Vert_F.
 \end{equation}
 where $\mathcal{R}_m=\mathcal {C}-\mathcal {A} \ast_N  \mathcal {X}$.
 
 \noindent Let $m$ steps of Algorithm \ref{alg1} has been performed. Given an initial guess $\mathcal {X}_0$, we set
 \begin{equation}\label{tgmres2}
\mathcal {X}_m=\mathcal {X}_0+ \mathbb{V}_m \bar{\times}_{(M+N+1)} y_m,
 \end{equation}
which results $\mathcal{R}_m=\mathcal {R}_0-\mathbb{W}_m \bar{\times}_{(M+N+1)} y_m$.
 Using the relations \eqref{lin1}, from Proposition \ref{p2} it immediate to observe that
 \begin{eqnarray*}
 \Vert  \mathcal {C}-\mathcal {A} \ast_N  \mathcal {X}_m \Vert_F & = & \Vert \mathbb{V}_m \boxtimes^{(M+N+1)}( \mathcal {C}-\mathcal {A} \ast_N  \mathcal {X}_m) \Vert_2\\
  & = &\Vert \mathbb{V}_m \boxtimes^{(M+N+1)}(\mathcal {R}_0-\mathbb{W}_m \bar{\times}_{(M+N+1)} y_m) \Vert_2\\
   & = &\Vert \beta e_1^{m+1} -\mathbb{V}_m \boxtimes^{(M+N+1)}(\mathbb{W}_m \bar{\times}_{(M+N+1)} y_m) \Vert_2\\
   & = &\Vert \beta e_1^{m+1} -(\mathbb{V}_m \boxtimes^{(M+N+1)}\mathbb{W}_m) y_m) \Vert_2.
 \end{eqnarray*}
Therefore, $y_m$ is determined as follows:
 \begin{equation}
 \label{tgmres3}
y_m=\arg \min_y \Vert \beta e_1^{m+1} -\widetilde H_m y\Vert_2.
\end{equation}

\noindent The relations \eqref{tgmres2} and  \eqref{tgmres3} define the tensor global GMRES (TG-GMRES). 
\noindent Setting $\mathcal {X}_0=0$ and using the relations \eqref{tgmres1}, \eqref{tgmres2} and \eqref{tgmres3} it follows that instead of solving the  problem \eqref{Tikhrd} we can consider the following  low dimensional Tikhonov regularization problem

\begin{equation}\label{tikho2}
\Vert \beta e_1^{m+1} -\widetilde H_m y\Vert_2^2 + \mu \Vert y \Vert_2^2.
\end{equation}

\medskip 

\noindent The solution of the problem \eqref{tikho2} is given by

\begin{equation}
\label{min}
y_{m,\mu}= \arg \min\left  \Vert \left ( \begin{array}{ll}
\widetilde H_m\\
\sqrt{\mu} I
\end{array}\right ) y - \left ( \begin{array}{ll}
\beta e_1^{m+1}\\
0
\end{array}\right ) 
\right \Vert_2.
\end{equation}
The minimizer $y_{m,\mu}$  of the problem \eqref{min} is computed as the solution of the linear system of equations
\begin{equation}{\label{min1}}
\widetilde H_{m,\mu} y=\widetilde H_m^T\beta e_1^{m+1}
\end{equation}
where $\widetilde H_{m,\mu}= (\widetilde H_m^T \widetilde H_m+ \mu I)$.

\noindent Notice that the Tikhonov problem \eqref{tikho2} is a matrix one with small dimension as $m$ is generally small. Hence it can be solved by some techniques such as the GCV method \cite{golubwahba} or the L-curve criterion \cite{hansen1,hansen2,reichel1,reichel2}. \\

An appropriate selection of the regularization parameter $\mu$
is important in  Tikhonov regularization. Here we can use  the
generalized cross-validation (GCV) method
\cite{bouh,golubwahba,wahbagolub}. For this method, the regularization
parameter is chosen to minimize the GCV function
$$GCV(\mu)=\frac{||\widetilde H_m y_{m,\mu}-\beta e_1^{m+1}||_2^2}{[tr(I-\widetilde H_m  \widetilde H_{m,\mu}^{-1}\widetilde H_m^T)]^2}=\frac{||(I-\widetilde H_m \widetilde H_{m,\mu}^{-1} \widetilde H_m^T)\beta e_1^{m+1}||_2^2}{[tr(I-H_m H_{m,\mu}^{-1} \widetilde H_m^T)]^2}$$ where $\widetilde H_{m,\mu}= (\widetilde H_m^T \widetilde H_m+ \mu I)$ and ${y}_{m, \mu}$ is the solution of
(\ref{min1}). As the projected problem we are dealing with is of small size, we cane use the SVD decomposition of $\widetilde H_m$ to obtain a more simple and computable expression of $GCV(\mu)$. Consider the SVD decomposition of $\widetilde H_m=U\Sigma V^T$. Then the GCV function could be expressed as (see \cite{wahbagolub})

\begin{equation}
\label{gcv2}
GCV(\mu)=\frac{\displaystyle
	\sum_{i=1}^m(\frac{\tilde
		g_i}{\sigma_i^2+\mu})^2}{\displaystyle\Bigl(\sum_{i=1}^m
	\frac{1}{\sigma_i^2+\mu}\Bigr)^2},
\end{equation}
where $\sigma_i$ is the $i$th singular value of the matrix
$\widetilde H_m$ and $\tilde g= \beta_1 U^T  e_1^{m+1}$.

In  the practical implementation, it's more convenient to use a restarted version of the global GMRES. As the number of outer iterations increases, it is possible to compute the $m$-th residual without forming the solution. This is described in the following theorem.
\begin{proposition}
At step $m$, the residual $\mathcal{R}_{m}=\mathcal{C}-\mathcal{A}\ast_N \mathcal{X}_{m}$ produced by the tensor global GMRES method for solving (\ref{eq1}) has the following expression
\begin{equation}\label{resex}
\mathcal{R}_m=\mathbb{V}_{m+1} \bar{\times}_{(M+N+1)}\left(\gamma_{m+1}Q_me_{m+1}\right),
\end{equation}
where $Q_m$ is the unitary matrix obtained by QR decomposition of the upper Hessenberg matrix $\widetilde{H}_{m}$ and $\gamma_{m+1}$ is the last component of the vector $\beta Q_{m}^{\mathrm{T}} e_{m+1}$ in which $\beta=\|\mathcal{R}_0\|_F$  and $e_{\ell}\in \mathbb{R}^{\ell}$ is the last column of identity matrix. Furthermore,
\begin{equation}\label{resnrm}
    \left\|\mathcal{R}_{m}\right\|_{F}=\left|\gamma_{m+1}\right|
\end{equation}

\end{proposition}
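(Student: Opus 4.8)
The plan is to follow the classical derivation of the GMRES residual expression, carrying every computation through the global Arnoldi relation \eqref{lin1} and the linearity recorded in Proposition \ref{p2}; the only ingredient that is specific to the tensor setting is that the frontal slices $\mathcal V_1,\dots,\mathcal V_{m+1}$ of $\mathbb V_{m+1}$ are orthonormal with respect to the Frobenius inner product, which is precisely what Algorithm \ref{alg1} delivers.

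First I would project the residual onto the Krylov basis. From \eqref{tgmres2} one has $\mathcal R_m=\mathcal R_0-\mathbb W_m\bar{\times}_{(M+N+1)}y_m$, and inserting $\mathbb W_m=\mathbb V_{m+1}\times_{(M+N+1)}\widetilde H_m^{\mathrm T}$ from \eqref{lin1}, together with $\mathcal R_0=\beta\mathcal V_1$ (so that $\mathcal R_0=\mathbb V_{m+1}\bar{\times}_{(M+N+1)}(\beta e_1^{m+1})$) and using Proposition \ref{p2}, gives $\mathcal R_m=\mathbb V_{m+1}\bar{\times}_{(M+N+1)}(\beta e_1^{m+1}-\widetilde H_m y_m)$, where $e_1^{m+1}$ is the first column of $I_{m+1}$. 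At this point the whole question has been reduced to understanding the small vector $r_m:=\beta e_1^{m+1}-\widetilde H_m y_m\in\mathbb R^{m+1}$, the residual of the least-squares problem \eqref{tgmres3}.

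Next I would bring in the QR factorization $\widetilde H_m=Q_m R_m$, with $Q_m\in\mathbb R^{(m+1)\times(m+1)}$ orthogonal and $R_m\in\mathbb R^{(m+1)\times m}$ upper triangular with vanishing last row. Because $Q_m$ is orthogonal, $\|\beta e_1^{m+1}-\widetilde H_m y\|_2=\|Q_m^{\mathrm T}\beta e_1^{m+1}-R_m y\|_2$; writing $Q_m^{\mathrm T}\beta e_1^{m+1}=(g_1,\dots,g_m,\gamma_{m+1})^{\mathrm T}$, the zero last row of $R_m$ forces the minimizer $y_m$ to be the (unique) solution of the nonsingular triangular system formed by the first $m$ rows, so the minimal value is $|\gamma_{m+1}|$ and, rotating back, $r_m=Q_m(Q_m^{\mathrm T}\beta e_1^{m+1}-R_m y_m)=\gamma_{m+1}\,Q_m e_{m+1}$. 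Substituting this into the formula from the previous paragraph yields \eqref{resex}. For \eqref{resnrm} I would use that $\mathbb V_{m+1}\bar{\times}_{(M+N+1)}z=\sum_{i=1}^{m+1}z_i\mathcal V_i$, so by bilinearity of $\langle\cdot,\cdot\rangle$ and $\langle\mathcal V_i,\mathcal V_j\rangle=\delta_{ij}$ one gets $\|\mathbb V_{m+1}\bar{\times}_{(M+N+1)}z\|_F=\|z\|_2$ for every $z\in\mathbb R^{m+1}$; taking $z=r_m=\gamma_{m+1}Q_m e_{m+1}$ and using $\|Q_m e_{m+1}\|_2=1$ gives $\|\mathcal R_m\|_F=|\gamma_{m+1}|$.

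There is no deep step here; the main points requiring care are bookkeeping ones. One must verify that Algorithm \ref{alg1} indeed produces $\langle\mathcal V_i,\mathcal V_j\rangle=\delta_{ij}$ (the orthonormality claim already asserted after the algorithm) and that $\bar{\times}_{(M+N+1)}$ and $\boxtimes^{(M+N+1)}$ interact as in Proposition \ref{p2} at the relevant order, so that the reduction in the second paragraph is legitimate; one should also note that the $y_m$ of \eqref{tgmres3} coincides with the $y_m$ obtained from the QR solve, which is immediate from uniqueness of the least-squares minimizer as long as $\widetilde H_m$ has full column rank — and if it does not, a lucky breakdown has already occurred and the exact solution has been reached. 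Once these are in place, everything else is the standard GMRES/QR argument transplanted to the $\bar{\times}_{(M+N+1)}$ notation.
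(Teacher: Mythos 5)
Your proposal is correct and follows essentially the same route as the paper: reduce $\mathcal R_m$ via the relation \eqref{lin1} to the small residual vector $\beta e_1^{m+1}-\widetilde H_m y_m$, apply the QR factorization of $\widetilde H_m$ together with the optimality of $y_m$ to identify that vector as $\gamma_{m+1}Q_m e_{m+1}$, and use orthonormality of the frontal slices to pass between the Frobenius norm and the Euclidean norm. The only difference is cosmetic ordering — you derive the residual expression first and read off the norm from the isometry, whereas the paper computes $\|\mathcal R_m\|_F$ first and then the expression — so the arguments coincide in substance.
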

\begin{proof}
    At step $m$, the residual $\mathcal{R}_m=\mathcal {R}_0-\mathbb{W}_m \bar{\times}_{(M+N+1)} y_m$ can be expressed as
    \begin{eqnarray*}
    \mathcal{R}_m & = & \mathcal {R}_0-(\mathbb{V}_{m+1} \times_{(M+N+1)} {\widetilde H}_m^T)\bar{\times}_{(M+N+1)}y_m\\
    &=&\mathcal {R}_0-\mathbb{V}_{m+1} \bar{\times}_{(M+N+1)} ({\widetilde H}_my_m)\
    \end{eqnarray*}
    by considering the QR decomposition $\widetilde{H}_{m}=Q_{m}\widetilde{U}_m$ of the $(m + 1) \times m$ matrix $\widetilde{H}_{m}$, we get
    $$\mathcal{R}_m=\mathcal {R}_0-\mathbb{V}_{m+1} \bar{\times}_{(M+N+1)} (Q_{m}\widetilde{U}_my_m).$$
    Straightforward computations show that
    \begin{eqnarray*}
    \|\mathcal{R}_m\|_F^2 & = & \|\mathcal {R}_0-\mathbb{V}_{m+1} \bar{\times}_{(M+N+1)} (Q_{m}\widetilde{U}_my_m) \|_F^2\\
    & = & \|\mathbb{V}_m \boxtimes^{(M+N+1)}(\mathcal {R}_0-\mathbb{V}_{m+1} \bar{\times}_{(M+N+1)} (Q_{m}\widetilde{U}_my_m)) \|_2^2\\
     & = & \|Q_{m}(Q_m^T\beta e_1^{m+1}-\widetilde{U}_my_m) \|_2^2\\
     & = & \|Q_m^T\beta e_1^{m+1}-\widetilde{U}_my_m \|_2^2\\
     & = & \|z_m-\widetilde{U}_my_m \|_2^2 + \left|\gamma_{m+1}\right|^2
    \end{eqnarray*}
    where $z_m$ denotes vector obtained by deleting the last component of $Q_m^T\beta e_1^{m+1}$. Since $y_m$ solves problem (\ref{tgmres3}), it follows that $y_m$ is the solution of $\widetilde{U}_my_m =z_m$, i.e.,
    \[
    \|z_m-\widetilde{U}_my_m \|_2=0.
    \]
Note that $\mathcal{R}_m$ can be written in the following form
   \begin{eqnarray*}
 \mathcal{R}_m & = &\beta \mathbb{V}_{m+1} \bar{\times}_{(M+N+1)} e_1^{m+1}-\mathbb{V}_{m+1} \bar{\times}_{(M+N+1)} (\widetilde{H}_my_m) \\
 & = & \mathbb{V}_{m+1} \bar{\times}_{(M+N+1)} (\beta e_1^{m+1}- \widetilde{H}_my_m)\\
 & = & \mathbb{V}_{m+1} \bar{\times}_{(M+N+1)} (Q_m(Q_m^T\beta e_1^{m+1}- \widetilde{U}y_m))\\
 & = & \mathbb{V}_{m+1} \bar{\times}_{(M+N+1)} (Q_m\gamma_{m+1}e_{m+1}).
   \end{eqnarray*}
Now the result follows immediately from the above computations. 
\end{proof}

The tensor form of global GMRES  algorithm for solving  (\ref{eq1}) is summarized as
follows:
  \vspace{0.3cm}
 \begin{algorithm}[!h]
	\caption{Global GMRES method via Einstein product for Tikhonov regularization}\label{alg11}
	\begin{enumerate}
\item {\bf Inputs} The tensors $\mathcal {A}$, $\mathcal {C}$, initial guess $\mathcal{X}_0$, a tolerance $\varepsilon$, number of iterations between restarts $m$ and \textbf{Maxit}: maximum number of outer iterations.
\item Compute $\mathcal{R}_0=\mathcal C- \mathcal {A} \ast_N  \mathcal {X}_0$, set $\mathcal{V}=\mathcal{R}_0$ and $k=0$
\item Determine the orthonormal frontal slices $\mathcal{V}_1,\ldots,\mathcal{V}_m$ of $\mathbb{V}_{m}$, and the upper Hessenberg matrix  $\widetilde{H}_m$ by applying Algorithm \ref{alg1} to the pair $\left(\mathcal{A}, \mathcal{ V}\right)$. 
\item  Determine $\mu_{k}$  as the parameter minimizing the GCV function  given by (\ref{gcv2})
\item  Determine $y_m$ as the solution of low-dimensional Tikhonov regularization problem (\ref{tikho2}) and set $\mathcal {X}_m=\mathcal {X}_0+ \mathbb{V}_m \bar{\times}_{(M+N+1)} y_m$ \item  If $\left| \gamma_{m+1}\right|_{F}<\varepsilon$ or $k>\textbf{Maxit}$; Stop \\
else: set $\mathcal X_{0}=\mathcal{X}_{m}$, $k=k+1,$ Goto 2
\end{enumerate}
\end{algorithm}
\subsection{Golub--Kahan method via Einstein}
Instead of finding orthonormal basis for the Krylov subspace and using GMRES method, one can apply oblique projection schemes based on biorthogonal bases for $\mathcal {K}_m(\mathcal {A},\mathcal {V})$ and  $\mathcal {K}_m(\mathcal {A}^T,\mathcal {W})$;  see \cite{jbilou2} for instance.

Here, we exploit the tensor Golub--Kahan algorithm via the Einstein product. It should be commented here that the Golub--Kahan algorithm has been already examined for solving ill-posed Sylvester and Lyapunov tensor equations  with applications to color image restoration \cite{beik2}. \\
Let tensors $\mathcal {A} \in \mathbb{R}^{I_{1}\times \ldots \times I_{N}\times   I_{1}\times \ldots \times I_{N}}$, $\mathcal {V}  \in \mathbb{R}^{I_{1}\times \ldots \times I_{N}\times   J_{1}\times \ldots \times J_{M}}$ and $\mathcal {U}  \in \mathbb{R}^{J_{1}\times \ldots \times J_{M}\times   I_{1}\times \ldots \times I_{M}}$ be given. Then, the global Golub--Kahan bidiagonalization (GGKB) algorithm is summarized in Algorithm \ref{alg2}.

\begin{algorithm}[!h]
	\caption{Global Golub--Kahan algorithm via Einstein product}\label{alg2}
	\begin{enumerate}
\item {\bf Inputs} The tensors $\mathcal {A}$, $\mathcal {C}$,  and an integer $\ell$.
\item 	Set $\sigma_1= \Vert \mathcal{C} \Vert_F$, 
$\mathcal {U}_1=\mathcal {C}/\sigma_1$ and  $\mathcal {V}_0=0$
\item For $j=1,2,\ldots,\ell$ Do
 \item $\widetilde {\mathcal {V}}= \mathcal {A}^T \ast_N \mathcal {U}_{j} -\sigma_{j}\mathcal {V}_{j-1}$
	\item $\rho_j=\Vert \widetilde {\mathcal {V}}\Vert_F$ if $\rho_j=0$ stop, else
	\item $\mathcal {V}_j=\widetilde {\mathcal {V}}/\rho_j$
	\item $\widetilde {\mathcal {U}}=\mathcal {A} \ast_N \mathcal {V}_j-\rho_j \mathcal {U} _{j}$
	\item $\sigma_{j+1}=\Vert \widetilde {\mathcal {U}} \Vert_F$
	\item if $\rho_j=0$ stop, else
	\item $\mathcal {U}_{j+1}=\widetilde {\mathcal {U}}/\sigma_{j+1}$
\item EndDo
\end{enumerate}

\end{algorithm}

\medskip 

\noindent 
Assume that  $\ell$ steps of the GGKB process have been performed, we form the lower bidiagonal matrix $C_\ell\in\mathbb{R}^{\ell\times \ell}$ 
 \[
 C_\ell=\begin{bmatrix}
 \rho_1\\
 \sigma_2 & \rho_2&\\
 &\ddots&\ddots\\
 & & \sigma_{\ell-1}&\rho_{\ell-1}\\
 &&&\sigma_\ell&\rho_\ell
 \end{bmatrix}
 \]
 and 
 \[
 \widetilde{C}_\ell=\begin{bmatrix}
 C_\ell\\
 \sigma_{\ell+1}e_\ell^T
 \end{bmatrix}\in\mathbb{R}^{(\ell+1)\times \ell}.
 \]

\begin{proposition}\label{prop4.3}
Assume that $\ell$ have performed and all non-trivial entries of the matrix $\widetilde{C}_{\ell}$ are positive.  Let $\mathbb{V}_\tau$ and $\mathbb{U}_\tau$  be $(M+N+1)$-mode tensors 
whose frontal slices are given by $\mathcal{V}_j$ and $\mathcal{U}_j$ for $j=1,2,\ldots,\tau$, respectively. Furthermore, suppose that $\mathbb{W}_\tau$ and  $\mathbb{W}_\tau^*$ are $(M+N+1)$-mode tensors 
having frontal slices $\mathcal {A} \ast_N\mathcal{V}_j$ and $\mathcal {A}^T \ast_N\mathcal{U}_j$ for $j=1,2,\ldots,\tau$, respectively. 
The following relations hold:
 \begin{eqnarray}
  \mathbb{W}_\ell& = &\mathbb{U}_{\ell+1} \times_{(M+N+1)} {\widetilde C}_\ell^T,\label{eq4.13}\\
 \mathbb{W}_\ell^*	& = &\mathbb{V}_{\ell} \times_{(M+N+1)} { C}_\ell^T.\label{eq4.14}
 \end{eqnarray} 
\end{proposition}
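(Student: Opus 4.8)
\noindent The plan is to read off, from Algorithm~\ref{alg2}, the column-wise recurrences satisfied by the frontal slices $\mathcal{V}_j$ and $\mathcal{U}_j$, and then to repackage those recurrences as $n$-mode products, in exactly the same way the global Arnoldi recurrence was turned into relation~\eqref{lin1}. Concretely, steps~4--6 of Algorithm~\ref{alg2} give $\rho_j\mathcal{V}_j=\mathcal{A}^T\ast_N\mathcal{U}_j-\sigma_j\mathcal{V}_{j-1}$, i.e.
\[
\mathcal{A}^T\ast_N\mathcal{U}_j=\rho_j\mathcal{V}_j+\sigma_j\mathcal{V}_{j-1},\qquad j=1,\ldots,\ell,
\]
with the convention $\mathcal{V}_0=\mathcal{O}$, while steps~7--10 give $\sigma_{j+1}\mathcal{U}_{j+1}=\mathcal{A}\ast_N\mathcal{V}_j-\rho_j\mathcal{U}_j$, i.e.
\[
\mathcal{A}\ast_N\mathcal{V}_j=\rho_j\mathcal{U}_j+\sigma_{j+1}\mathcal{U}_{j+1},\qquad j=1,\ldots,\ell.
\]
The hypothesis that all non-trivial entries of $\widetilde{C}_{\ell}$ are positive is used here only to ensure that no breakdown occurs in steps~5 and~9, so that $\mathcal{V}_1,\ldots,\mathcal{V}_\ell$ and $\mathcal{U}_1,\ldots,\mathcal{U}_{\ell+1}$ are all well defined and the two recurrences hold throughout the stated ranges.

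Next I would invoke the same elementary fact already used for~\eqref{lin1}: if $\mathbb{Z}$ is an $(M+N+1)$-mode tensor with frontal slices $\mathcal{Z}_1,\ldots,\mathcal{Z}_p$ and $B$ is a matrix, then $\mathbb{Z}\times_{(M+N+1)}B$ is again an $(M+N+1)$-mode tensor whose $k$-th frontal slice equals $\sum_i B_{ki}\,\mathcal{Z}_i$; this is immediate from the definition of the $n$-mode product. Applying it with $\mathbb{Z}=\mathbb{U}_{\ell+1}$ and $B=\widetilde{C}_{\ell}^{T}$, the $k$-th frontal slice of $\mathbb{U}_{\ell+1}\times_{(M+N+1)}\widetilde{C}_{\ell}^{T}$ is $\sum_i(\widetilde{C}_{\ell})_{ik}\,\mathcal{U}_i$, which by the lower-bidiagonal structure of $\widetilde{C}_{\ell}$ collapses to $\rho_k\mathcal{U}_k+\sigma_{k+1}\mathcal{U}_{k+1}$; by the second recurrence above this is $\mathcal{A}\ast_N\mathcal{V}_k$, i.e. the $k$-th frontal slice of $\mathbb{W}_\ell$. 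Since $k$ is arbitrary, \eqref{eq4.13} follows. Identity~\eqref{eq4.14} is obtained in the same manner, reading the $k$-th frontal slice of the right-hand side of~\eqref{eq4.14} off from the bidiagonal entries of $C_{\ell}$ and matching it against $\mathcal{A}^T\ast_N\mathcal{U}_k$ from the first recurrence.

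I expect the argument to be essentially bookkeeping; the one point requiring genuine care is the interplay between the transpose appearing in the $n$-mode product and the indexing of the bidiagonal matrices $C_{\ell}$ and $\widetilde{C}_{\ell}$, together with the boundary indices --- namely the term $\mathcal{V}_0=\mathcal{O}$ at $j=1$ and the extra slice $\mathcal{U}_{\ell+1}$ carried by $\mathbb{U}_{\ell+1}$ but not by $\mathbb{U}_\ell$, which is supplied on the right-hand side of~\eqref{eq4.13} precisely by the bottom row $\sigma_{\ell+1}e_\ell^{T}$ of $\widetilde{C}_{\ell}$ --- so that the numbers of frontal slices on the two sides of each identity agree. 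Verifying the two relations slice by slice, rather than globally, makes all of this transparent, since it reduces everything to the two scalar recurrences displayed above.
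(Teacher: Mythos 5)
Your route is the same as the paper's: read the two coupled recurrences off Algorithm~\ref{alg2}, namely $\mathcal{A}\ast_N\mathcal{V}_j=\rho_j\mathcal{U}_j+\sigma_{j+1}\mathcal{U}_{j+1}$ (lines 7 and 10) and $\mathcal{A}^T\ast_N\mathcal{U}_j=\rho_j\mathcal{V}_j+\sigma_j\mathcal{V}_{j-1}$ with $\mathcal{V}_0=\mathcal{O}$ (lines 4 and 6), and then repackage them slicewise using the definition of the $n$-mode product, exactly as was done for \eqref{lin1}. For \eqref{eq4.13} your verification is complete and correct: the $k$-th frontal slice of $\mathbb{U}_{\ell+1}\times_{(M+N+1)}\widetilde{C}_\ell^T$ is $\sum_i(\widetilde{C}_\ell)_{ik}\mathcal{U}_i=\rho_k\mathcal{U}_k+\sigma_{k+1}\mathcal{U}_{k+1}$, which is the $k$-th slice of $\mathbb{W}_\ell$.

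The one step you defer --- ``\eqref{eq4.14} is obtained in the same manner'' --- is precisely where the bookkeeping does not close as stated, and you flagged the transpose/indexing interplay as the delicate point without actually carrying out the check. By your own slice formula, the $k$-th frontal slice of $\mathbb{V}_{\ell}\times_{(M+N+1)}C_\ell^T$ is $\sum_i(C_\ell)_{ik}\mathcal{V}_i=\rho_k\mathcal{V}_k+\sigma_{k+1}\mathcal{V}_{k+1}$, whereas the recurrence from lines 4 and 6 gives $\mathcal{A}^T\ast_N\mathcal{U}_k=\rho_k\mathcal{V}_k+\sigma_k\mathcal{V}_{k-1}$; these do not coincide. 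With the convention used in \eqref{lin1} and \eqref{eq4.13} (slices of $\mathbb{Z}\times_{(M+N+1)}B$ are formed from the \emph{rows} of $B$), the recurrence you derived packages as $\mathbb{W}_\ell^{*}=\mathbb{V}_\ell\times_{(M+N+1)}C_\ell$ (equivalently, the matrix identity $A^TU_\ell=V_\ell C_\ell^T$), so the transpose in \eqref{eq4.14} as printed is inconsistent with \eqref{eq4.13} unless $C_\ell$ is redefined as upper bidiagonal. The paper's own proof is equally terse at this point (``similarly, from Lines 4 and 6''), so in substance your proposal matches it; but to finish you must perform the slice computation for the second relation and either state it with $C_\ell$ in place of $C_\ell^T$ or note the needed correction --- asserting the match without checking it is the one genuine gap.
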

\begin{proof}
	From Lines 7 and 10 of Algorithm \ref{alg2}, we have	
\[
\mathcal {A} \ast_N\mathcal{V}_j=\rho_j\mathcal{U}_j+ \sigma_{j+1}\mathcal{U}_{j+1}\qquad j=1,2\ldots,\ell
\]
which conclude \eqref{eq4.13} from definition of $n$-mode product. Similarly, Eq. \eqref{eq4.14} follows from Lines 4 and 6 of Algorithm \ref{alg2}.
	\end{proof}

\medskip
\noindent 
Here, we apply the following Tikhonov regularization approach and solve the new problem

\begin{equation}\label{tikho3}
\displaystyle \min_{\mathcal{X}} \left(\Vert \mathcal{A} \ast_N \mathcal{X}-\mathcal{C} \Vert_F^2 + \mu^{-1} \Vert \mathcal{C} \Vert_F^2\right),
\end{equation}
 We comment on the use of  $\mu^{-1} $
in (\ref{tikho3}) instead of $\mu$  below. As for the iterative tensor Global GMRES method discussed in the previous subsection, 
the computation of an accurate approximation $\mathcal{X}_\mu$ requires that a suitable
value of the regularization parameter  be used.  In this subsection, we use the discrepancy principle to determine a suitable regularization parameter assuming that an approximation of the norm of additive error is available, 	i.e., we have a bound $\varepsilon$ for $\|\mathcal{E}\|_F$. This priori information suggests that $\mu$ has to be determined such that,
\begin{equation}\label{discrepancy}
\|\mathcal{C}-\mathcal{A} \ast_N \mathcal{X}_\mu\|_F=\eta\epsilon,
\end{equation}
where $\eta>1$ is the safety factor for the discrepancy principle. A zero-finding method can be used to solve (\ref{discrepancy}) in order to find a suitable regularization parameter which also implies that $\|\mathcal{C}-\mathcal{A} \ast_N \mathcal{X}_\mu\|_F$ has to be evaluated for several $\mu$-values. When the tensor  $\mathcal{A}$ is of moderate size, the quantity $\|\mathcal{C}-\mathcal{A} \ast_N \mathcal{X}_\mu\|_F$ can be easily evaluated. This computation becomes expensive when $\mathcal{A}$ is a large tensor, which means that its evaluation by a  zero-finding method can be very difficult and computationally expensive. In what follows, it is shown that this  difficulty can be remedied by using a connection between the   Golub--Kahan bidiagonalization (GGKB) and  Gauss-type quadrature rules. This connection provides approximations of moderate sizes to the quantity  $\|\mathcal{C}-\mathcal{A} \ast_N \mathcal{X}_\mu\|_F$ and therefore gives a solution method to inexpensively solve (\ref{discrepancy}) by evaluating these small quantities; see \cite{belguide, belguide2} for discussion on this method.\\
Let us consider the following functions of $\mu$,
\begin{eqnarray}\label{Gkfmu}
\phi(\mu)&=&\left\|\mathcal{C}-\mathcal{A} \ast_N \mathcal{X}_\mu\right\|_{F}^{2}\\
\mathcal{G}_\ell f_\mu&=&\|\mathcal{C}\|_F^2 e_1^T(\mu C_\ell C_\ell^T+I_\ell)^{-2}e_1,\\
{\mathcal R}_{\ell+1}f_\mu&=&\|\mathcal{C}\|_F^2 e_1^T(\mu \widehat{C}_\ell\widehat{C}_\ell^T+I_{\ell+1})^{-2}e_1;
 \end{eqnarray}
 $\mathcal{G}_lf$ and ${\mathcal R}_{\ell+1}f_\mu$ are pairs of Gauss and Gauss-Radau quadrature rules, respectively, and they approximate $\phi(\mu)$ as follows
 \begin{equation}
\mathcal{G}_\ell f_\mu\leq\phi(\mu)\leq{\mathcal R}_{\ell+1}f_\mu
 \end{equation}
 As shown in \cite{belguide, belguide2}, for a given value of $l\geq 2$, we solve for $\mu$ the nonlinear equation
 \begin{equation}\label{lin22}
 {\mathcal G}_\ell f_\mu=\epsilon^2
 \end{equation}
 by using Newton's method.\\
  The  use the parameter $\mu$ in (\ref{tikho3}) instead of $1 / \mu,$ implies that the left-hand side of (\ref{discrepancy}) is a decreasing convex function of $\mu .$ Therefore, there is a unique solution, denoted by $\mu_{\varepsilon},$ of
$$
\phi(\mu)=\varepsilon^{2}
$$
for almost all values of $\varepsilon>0$ of practical interest and therefore also of (\ref{lin22}) for $\ell$ sufficiently large; see \cite{belguide, belguide2} for analyses.  We accept $\mu_\ell$ that solve (\ref{discrepancy}) as an approximation of $\mu$, whenever we have
 \begin{equation}\label{upperbd}
 {\mathcal R}_{\ell+1}f_{\mu}\leq\eta^2\epsilon^2. 
 \end{equation}
 If (\ref{upperbd}) does not hold for $\mu_l$, we carry out one more GGKB steps, replacing $\ell$ by $\ell+1$ and solve the nonlinear equation
 \begin{equation}\label{}
 {\mathcal G}_{\ell+1}f_\mu=\epsilon^2;
 \end{equation}
 see \cite{belguide, belguide2} for more details. Assume now that (\ref{upperbd}) holds for some $\mu_\ell$. The corresponding regularized solution is then computed by
 \begin{equation}\label{Xkmu}
\mathcal {X}_\ell=\mathbb{V}_\ell \bar{\times}_{(M+N+1)} y_\ell,
 \end{equation}
 where $y_{\mu_\ell}$ solves 
 \begin{equation}\label{normeq2}
 (\bar{C}_\ell^T\bar{C}_\ell+\mu_\ell^{-1} I_l)y=\sigma_1\bar{C}_\ell^Te_1,\qquad\sigma_1=\|\mathcal{C}\|_F.
 \end{equation}
 It is also computed by solving the least-squares problem
 \begin{equation}\label{leastsq}
 \min_{y\in\mathbb{R}^\ell} \begin{Vmatrix}
 \begin{bmatrix}
 \mu_\ell^{1/2}\bar{C}_\ell\\
 I_\ell
 \end{bmatrix}
 y-\sigma_1\mu_\ell^{1/2}e_1 \end{Vmatrix}_2.
 \end{equation}The following result shows an important property of the approximate solution (\ref{Xkmu}). We include a proof for completeness.

 \begin{proposition}
 		Under  assumptions of Proposition \ref{prop4.3}, let $\mu_{\ell}$ solve (\ref{lin22}) and let $y_{\mu_{\ell}}$ solve (\ref{leastsq}). Then the associated approximate solution (\ref{Xkmu}) of (\ref{tikho3}) satisfies
$$
\left\|\mathcal{A}\ast_N \mathcal{X}_{\mu_{\ell}}-\mathcal{C}\right\|_{F}^{2}=R_{\ell+1} f_{\mu_{\ell}}
$$
  \end{proposition}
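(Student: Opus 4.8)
The plan is to push the tensor residual $\mathcal{A}\ast_N\mathcal{X}_{\mu_\ell}-\mathcal{C}$ through the GGKB relations of Proposition \ref{prop4.3} so that it becomes (the image under an isometry of) a small vector, and then to recognize the squared Euclidean norm of that vector as the Gauss--Radau value $R_{\ell+1}f_{\mu_\ell}$ via an elementary resolvent identity.

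First I would rewrite the residual. Since $\mathcal{X}_{\mu_\ell}=\mathbb{V}_\ell\bar{\times}_{(M+N+1)}y_{\mu_\ell}$ and the frontal slices of $\mathbb{W}_\ell$ are $\mathcal{A}\ast_N\mathcal{V}_j$, bilinearity of the Einstein product (the analogue of Proposition \ref{p2}) gives $\mathcal{A}\ast_N\mathcal{X}_{\mu_\ell}=\mathbb{W}_\ell\bar{\times}_{(M+N+1)}y_{\mu_\ell}$; substituting \eqref{eq4.13} and associating the $n$-mode products yields $\mathcal{A}\ast_N\mathcal{X}_{\mu_\ell}=\mathbb{U}_{\ell+1}\bar{\times}_{(M+N+1)}(\widetilde C_\ell\, y_{\mu_\ell})$. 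As $\mathcal{C}=\sigma_1\mathcal{U}_1=\mathbb{U}_{\ell+1}\bar{\times}_{(M+N+1)}(\sigma_1 e_1)$ with $e_1\in\mathbb{R}^{\ell+1}$, this gives $\mathcal{A}\ast_N\mathcal{X}_{\mu_\ell}-\mathcal{C}=\mathbb{U}_{\ell+1}\bar{\times}_{(M+N+1)}(\widetilde C_\ell\, y_{\mu_\ell}-\sigma_1 e_1)$. Because the $\mathcal{U}_j$ produced by Algorithm \ref{alg2} are orthonormal with respect to $\langle\cdot,\cdot\rangle$, the map $z\mapsto\mathbb{U}_{\ell+1}\bar{\times}_{(M+N+1)}z$ is an isometry from $(\mathbb{R}^{\ell+1},\|\cdot\|_2)$ onto its range, so $\|\mathcal{A}\ast_N\mathcal{X}_{\mu_\ell}-\mathcal{C}\|_F^2=\|\widetilde C_\ell\, y_{\mu_\ell}-\sigma_1 e_1\|_2^2$, i.e.\ the squared residual of the least-squares problem \eqref{leastsq}.

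Next I would evaluate that residual in closed form. The normal equations \eqref{normeq2} give $y_{\mu_\ell}=\sigma_1(\widetilde C_\ell^T\widetilde C_\ell+\mu_\ell^{-1}I_\ell)^{-1}\widetilde C_\ell^T e_1$ (the inverse exists since all non-trivial entries of $\widetilde C_\ell$ are positive, hence $\widetilde C_\ell$ has full column rank). Applying the push-through identity $\widetilde C_\ell(\widetilde C_\ell^T\widetilde C_\ell+\mu_\ell^{-1}I_\ell)^{-1}\widetilde C_\ell^T=I_{\ell+1}-\mu_\ell^{-1}(\widetilde C_\ell\widetilde C_\ell^T+\mu_\ell^{-1}I_{\ell+1})^{-1}$ yields $\widetilde C_\ell\, y_{\mu_\ell}-\sigma_1 e_1=-\sigma_1\mu_\ell^{-1}(\widetilde C_\ell\widetilde C_\ell^T+\mu_\ell^{-1}I_{\ell+1})^{-1}e_1=-\sigma_1(\mu_\ell\widetilde C_\ell\widetilde C_\ell^T+I_{\ell+1})^{-1}e_1$. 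Taking squared norms and recalling $\sigma_1=\|\mathcal{C}\|_F$ gives $\|\widetilde C_\ell\, y_{\mu_\ell}-\sigma_1 e_1\|_2^2=\|\mathcal{C}\|_F^2\,e_1^T(\mu_\ell\widetilde C_\ell\widetilde C_\ell^T+I_{\ell+1})^{-2}e_1$, which is exactly $R_{\ell+1}f_{\mu_\ell}$ (with the identification $\widehat C_\ell=\widetilde C_\ell$ used in the definition of $R_{\ell+1}f_\mu$). Combining with the first step completes the proof.

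I do not expect a genuine obstacle here: the only delicate point is the first step, where one must be sure that the global reductions behave exactly as in the matrix case, namely that $\mathcal{A}\ast_N(\mathbb{V}_\ell\bar{\times}_{(M+N+1)}y)=\mathbb{W}_\ell\bar{\times}_{(M+N+1)}y$ and that $\mathbb{U}_{\ell+1}\bar{\times}_{(M+N+1)}\,\cdot$ preserves norms; both follow from Proposition \ref{p2} and the orthonormality of the frontal slices generated by Algorithm \ref{alg2}. The one hypothesis to keep track of is the full column rank of $\widetilde C_\ell$, which is guaranteed by the positivity assumption inherited from Proposition \ref{prop4.3} and is precisely what makes \eqref{normeq2} uniquely solvable and the push-through identity applicable.
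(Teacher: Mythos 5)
Your proof is correct and follows essentially the same route as the paper's: use the relation \eqref{eq4.13} to reduce the tensor residual to $\widetilde{C}_\ell y_{\mu_\ell}-\sigma_1 e_1$ via the orthonormality of the frontal slices of $\mathbb{U}_{\ell+1}$, then apply the normal equations \eqref{normeq2} together with the identity $I-A(A^TA+\mu^{-1}I)^{-1}A^T=(\mu AA^T+I)^{-1}$ to obtain $\sigma_1^2 e_1^T(\mu_\ell\widetilde{C}_\ell\widetilde{C}_\ell^T+I_{\ell+1})^{-2}e_1=R_{\ell+1}f_{\mu_\ell}$. Your explicit remarks on the full column rank of $\widetilde{C}_\ell$ and on the identification of $\widehat{C}_\ell$ with $\widetilde{C}_\ell$ are small clarifications of points the paper leaves implicit, not deviations in method.
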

  \begin{proof}
By Eq. \ref{eq4.13}, we have
\begin{eqnarray*}
\mathcal{A}\ast_N \mathcal{X}_{\mu_{l}}=\sum_{i=1}^{\ell}(\mathcal{A}\ast_N \mathcal{V}_{i}) y_\ell^{i}&=&\mathbb{W}_{\ell}\bar{\times}_{(M+N+1)}y_\ell \\
&=&\mathbb{U}_{\ell+1}\bar{\times}_{(M+N+1)}(\widetilde{C}_\ell y_\ell)
\end{eqnarray*}
Using the above expression gives
$$
\begin{aligned}
\left\|\mathcal{A}\ast_N \mathcal{X}_{\mu_{l}, \ell}-\mathcal{C}\right\|_{F}^{2}&=\left\|\mathbb{U}_{\ell+1}\bar{\times}_{(M+N+1)}(\widetilde{C}_\ell y_\ell)-\sigma_1\mathcal{U}_1\right\|_{F}^{2} \\
&=\left\|\mathbb{U}_{\ell+1}\bar{\times}_{(M+N+1)}(\widetilde{C}_\ell y_\ell)-\mathbb{U}_{\ell+1}\bar{\times}_{(M+N+1)}(\sigma_1e_1)\right\|_{F}^{2} \\
&=\left\|\mathbb{U}_{\ell+1}\bar{\times}_{(M+N+1)}\left(\widetilde{C}_\ell y_\ell-\sigma_1e_1\right)\right\|_{F}^{2}\\
&=\left\|\mathbb{U}_{\ell+1}\boxtimes^{(M+N+1)}(\mathbb{U}_{\ell+1}\bar{\times}_{(M+N+1)}\left(\widetilde{C}_\ell y_\ell-\sigma_1e_1\right))\right\|_{F}^{2}\\
&=\left\|\left(\mathbb{U}_{\ell+1}\boxtimes^{(M+N+1)}\mathbb{U}_{\ell+1}\right)\left(\widetilde{C}_\ell y_\ell-\sigma_1e_1\right))\right\|_{2}^{2}\\
&=\left\|\widetilde{C}_\ell y_\ell-\sigma_1e_1\right\|_{2}^{2}
\end{aligned}
$$
where we recall that $\sigma_{1}=\|\mathcal{C}\|_{F}$. We now express $y_{\mu_{\ell}}$ with the aid of  (\ref{normeq2}) and apply the following  identity 
$$I-A\left(A^{T} A+\mu^{-1} I\right)^{-1} A^{T}=\left(\mu A A^{T}+I\right)^{-1}$$
with $A$ replaced by $\widehat{C}_{\ell},$ to obtain
$$
\begin{aligned}
\left\|\mathcal{A}\ast_N \mathcal{X}_{\mu_{l}, \ell}-\mathcal{C}\right\|_{F}^{2} &=\sigma_{1}^{2}\left\|e_{1}-\widetilde{C}_{\ell}\left(\widetilde{C}_{\ell}^{T} \widetilde{C}_{\ell}+\mu_{\ell}^{-1} I_{\ell}\right)^{-1} \widetilde{C}_{\ell}^{T} e_{1}\right\|_{F}^{2} \\
&=\sigma_{1}^{2} e_{1}^{T}\left(\mu_{\ell} \widetilde{C}_{\ell} \widetilde{C}_{\ell}^{T}+I_{\ell+1}\right)^{-2} e_{1} \\
&=R_{\ell+1} f_{\mu_{\ell}}
\end{aligned}
$$
which conclude the assertion.
  \end{proof}
  
\noindent  The following algorithm summarizes the main steps to compute a regularization parameter and a corresponding regularized solution of (\ref{eq1}) using GGKB and quadrature rules method for Tikhonov regularization.
 	  
 \begin{algorithm}[!h]
	\caption{GGKB and quadrature rules method for Tikhonov regularization via Einstein product}\label{TG-GK}
	\begin{enumerate}
\item {\bf Inputs} Tensors $\mathcal {A}$, $\mathcal {C}$, $\eta\leq 1$  and  $\varepsilon$.
\item Determine the orthonormal bases $\mathbb{U}_{l+1}$ and $\mathbb{V}_{l}$ of tensors, and the bidiagonal matrices $C_\ell$ and $\widetilde{C}_\ell$
by implementing Algorithm \ref{alg2}.
\item Determine $\mu_{\ell}$ that satisfies (\ref{lin22}) with Newton's method.
\item  Determine $y_{\mu_{\ell}}$ by solving  (\ref{leastsq}) and then compute $X_{\mu_{\ell}}$ by (\ref{Xkmu}).
\end{enumerate}

\end{algorithm}
\section{Numerical results}\label{sec4}
	    This section provides some numerical results to show the performance of Algorithms \ref{alg11} and Algorithm \ref{TG-GK}
	    when applied to the restoration of blurred and noisy color images and videos. For clarity and definiteness, we first focus on the formulation of a tensor model, describing the blurring that is taking place in the process of going from the exact to the blurred RGB image (or video). Notwithstanding what has just been said, recovering RGB (or video) from their blurry and noisy observations can be seen as a tensor problem of the form (\ref{eq1}). Therefore, it’s very important to understand how the model (\ref{eq1}) can be constructed for RGB images and color video deblurring problems. In what follows, we will concentrate only on the formulation of the tensor model for RGB image deblurring problems and will comment at the end of this section how a similar one can be formulated for color video deblurring problems. We recall that an RGB image is just multidimensional array of dimension $M\times N\times 3$ whose entries are the light intensity. Throughout this paper, we assume that the original RGB image has the same dimensions as the blurred one, and we refer to it as $N\times N\times 3$ tensor. Let  $\mathcal C$ represent the available blurred RGB image,  let $\mathcal X$ denote the desired unknown blurred RGB, and let $\mathcal A$ be the tensor describing  the blurring that is taking place in the process of going from $\mathcal X$ to $\mathcal C$. It is well known in the literature of image processing that all the blurring operators can be characterized by a Point Spread Function (PSF) describing the blurring process and the boundary conditions outside the image, see \cite{HNO}. Once the two-dimensional PSF array, $P$, is specified, we can as well build the blurring tensor $\mathcal A$. By using the fact that the blurring process of an RGB image is simply  a multi-dimensional convolution operation of the PSF array  $P$ and the original three-dimensional image $\mathcal X$,  the blurring tensor $\mathcal A$ can be easily constructed  by placing the elements of $P$ in the appropriate positions. Note that the PSF is a two-dimensional array $P$ describing the image of a single white pixel, which makes its dimensions much smaller than $N$. Therefore, $P$ contains all the required information about the blurring throughout the RGB image $\mathcal C$. To illustrate this, the discrete operation for multi-dimensional convolution using a $3\times 3$ local and spatially invariant PSF array $P$ with $p_{22}$ is its center,  and assuming zero boundary conditions, is given by:
      \begin{eqnarray}\label{conv}
      \mathcal{C}_{ijk}&=&p_{33}\mathcal X_{i-1j-1k}+p_{32}\mathcal X_{i-1jk}+p_{31}\mathcal X_{i-1j+1k}+p_{23}\mathcal X_{ij-1k}+p_{22}\mathcal X_{ijk}\\&+&p_{21}\mathcal X_{ij+1k}+p_{13}\mathcal X_{i+1j-1k}+p_{12}\mathcal X_{i+1jk}+p_{11}\mathcal X_{i+1j+1k},
   \end{eqnarray}
 for $i,j=1,...,N$ and $k=1,2,3.$ Here the zero boundary conditions are imposed so the values of $\mathcal X$ are zero outside the RGB image, i.e., $\mathcal{X}_{i0k}=\mathcal{X}_{iN+1k}=\mathcal{X}_{0jk}=\mathcal{X}_{N+1jk}=0$  for $0<i,j<N+1$ and $k=1,2,3.$
  By using  Definition \ref{defpari} and Definition \ref{defblock} a fourth order tensor $\mathcal A\in \mathbb{R}^{N \times N \times N \times N}$ associated with (\ref{conv}),  with partition $\left(1,N, 1, N\right)$,  can be partitioned into matrix blocks of size $N \times N$. Each block is denoted by $\mathcal A_{i_{2}, i_{4}}^{(2,4)}=$ $\mathcal A\left(:, i_{2}, :, i_{4}\right) \in \mathbb{R}^{N \times N}$ with $i_{2}=1, \ldots, N$ and $i_{4}=1, \ldots, N.$  The nonzero entries of the matrix block $\mathcal A_{a, b}^{(2,4)}\in \mathbb{R}^{N \times N}$ are given by
 
 $$\begin{array}{ll}
    (\mathcal A_{a, b}^{(2,4)})_{a-1b-1}= p_{33}; & (\mathcal A_{a, b}^{(2,4)})_{ab+1}= p_{21} \\
      (\mathcal A_{a, b}^{(2,4)})_{a-1b}= p_{32};&  (\mathcal A_{a, b}^{(2,4)})_{a+1b-1}= p_{13}  \\
      (\mathcal A_{a, b}^{(2,4)})_{a-1b+1}= p_{31};&(\mathcal A_{a, b}^{(2,4)})_{a+1b}= p_{12}  \\
      (\mathcal A_{a, b}^{(2,4)})_{ab-1}= p_{23};&(\mathcal A_{a, b}^{(2,4)})_{a+1b+1}= p_{11}  \\
      (\mathcal A_{a, b}^{(2,4)})_{ab}= p_{22}&
 \end{array}$$for $a, b=2, \dots, N-1.$\\
The first following examples applies  Algorithms \ref{alg11} and \ref{TG-GK} to
	    the restoration of blurred color image and video that have been contaminated by 
	    Gaussian blur and by additive zero-mean white
	    Gaussian noise. We consider the blurring to be local and spatially invariant. In this the case the entries of the Gaussian PSF array $P$ are given by
	    $$p_{i j}=\exp \left(-\frac{1}{2}\left(\frac{(i-k)}{\sigma}\right)^{2}-\frac{1}{2}\left(\frac{(j-\ell)}{\sigma}\right)^{2}\right),$$
	    where $\sigma$ controls the width of the Gaussian PSF and $(k,\ell)$ is its center, see \cite{HNO}. Note  that $\sigma$ controls the amount of smoothing, i.e. the larger the $\sigma$,  the more ill posed the problem. The original tensor image is denoted by $\widehat{\mathcal{X}}$ in each example and $\mathcal{A}$ represents the blurring tensor. The tensor $\widehat{\mathcal{C}}=\mathcal{A}\ast_N\widehat{\mathcal{X}}$ represents the associated blurred and noise-free multichannel image. We generated a blurred and noisy tensor image $\mathcal{C}=\widehat{\mathcal{C}}+\mathcal{N},$ where $\mathcal{N}$ is a noise tensor with normally distributed random entries with zero mean and with variance chosen to correspond to a specific noise level $\nu:=\|\mathcal{N}\|_F /\|\widehat{\mathcal{C}}\|_F.$
	    To determine the effectiveness of our solution methods, we evaluate 
	    $$\text{RE}=\frac{\left\|\hat{X}-X_{\textbf{restored}}\right\|_{F}}{\|\hat{X}\|_{F}}$$
	    and the Signal-to-Noise
	    Ratio (SNR) defined by
	    \[\text{SNR}(X_{\text{restored}})=10\text{log}_{10}\frac{\|\widehat{X}-E(\widehat{X})\|_F^2}{\|X_{\textbf{restored}}-\widehat{X}\|_F^2}\]
	    where $E(\widehat{X})$ denotes the mean gray-level of the uncontaminated image $\widehat{\mathcal{X}}$. 
	    All computations were carried out using the MATLAB environment on an Intel(R) Core(TM) i7-8550U CPU @ 1.80GHz (8 CPUs) computer with 12 GB of
	    RAM. The computations were done with approximately 15 decimal digits of relative
	    accuracy. 
	    \subsection{Example 1}
	    This example illustrates the performance of Algorithms \ref{alg11} and \ref{TG-GK} 4 when applied to the 
	    restoration of 3-channel RGB color image that have been contaminated by Gaussian blur and additive noise. The original (unknown) $\mathrm{RGB}$ image $\widehat{\mathcal X} \in \mathbb{R}^{256 \times 256 \times 3}$ is the \textbf{papav256} image from \textbf{MATLAB}. It is shown on the left-hand side of Figure \ref{fig1}. For the blurring tensor $\mathcal{A}$,  we consider a PSF array $P$ with $\sigma=2$ under zero boundary conditions. The associated blurred and noisy RGB image $\widehat{\mathcal{C}}=\mathcal{A}\ast_N\widehat{\mathcal{X}}$ is shown on the right-hand side of  Figure \ref{fig1}. The noise level is $\nu=10^{-3}$. Given the contaminated RGB image $\mathcal{C}$, we would like to recover an approximation of the original RGB image $\widehat{\mathcal X}$. Table \ref{tab1} compares, the computing time (in seconds),  the relative errors and the PSNR of the computed restorations. Note that in this table, the allowed maximum number of outer iterations for Algorithm \ref{alg11}  with noise level $\nu=10^{-2}$ was 4.  The restoration for  noise level $v=10^{-3}$ is shown on the left-hand side of Figure \ref{fig2} and it is obtained by applying  Einstein tensor global GMRES method (Algorithm
\ref{alg11}) with input $\mathcal{A}$, $\mathcal{C}$, $\mathcal{X}_0=\mathcal{O}$, $\varepsilon=10^{-6}$, $m=10$ and $\textbf{Maxit}=10$. Using GCV, the computed optimal value for the projected problem in Algorithm 2 was $\mu_5=9.44\times 10^{-4}.$  The restoration obtained with Algorithm \ref{TG-GK} is shown on the right-hand side of Figure \ref{fig2}. The discrepancy principle with $\eta=1.1$ is satisfied when $\ell=61$ steps of the Einstein tensor GGKB method have been carried out, producing a regularization parameter given by $\mu_\ell=2.95\times 10^{-4}$.
\begin{table}[htbp]
	\caption{Results for Example 1.}\label{tab1}
	\begin{center}
		\begin{tabular}{lcccc}
\hline Noise level & Method & PSNR & RE & CPU-time (seconds) \\
\hline {$10^{-3}$}& Algorithm 2 & 21.76 & $6.09 \times 10^{-2}$ & 8.28 \\  & Algorithm 4 & 24.37 & $4.51 \times 10^{-2}$ & 7.29 \\
\hline  {$10^{-2}$} & Algorithm 2 & 20.60 & $6.96 \times 10^{-2}$ & 3.31 \\
& Algorithm 4 & 20.97 & $6.67 \times 10^{-2}$ & 1.58 \\
\hline
\end{tabular}
	\end{center}
\end{table}
\begin{figure}
\begin{center}
\includegraphics[width=5in]{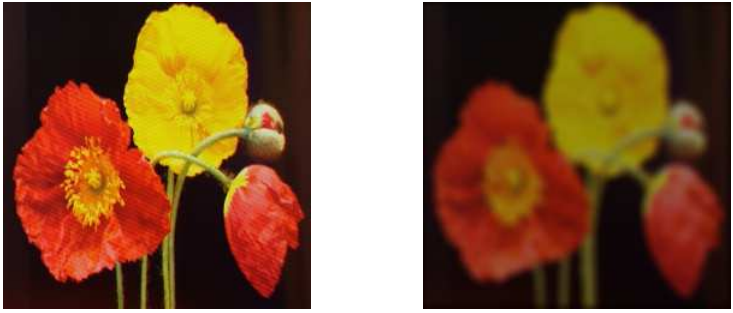}
\caption{Example 1: Original image (left), blurred and noisy image (right).}\label{fig1}
\end{center}
\end{figure}

\begin{figure}
\begin{center}
\includegraphics[width=5in]{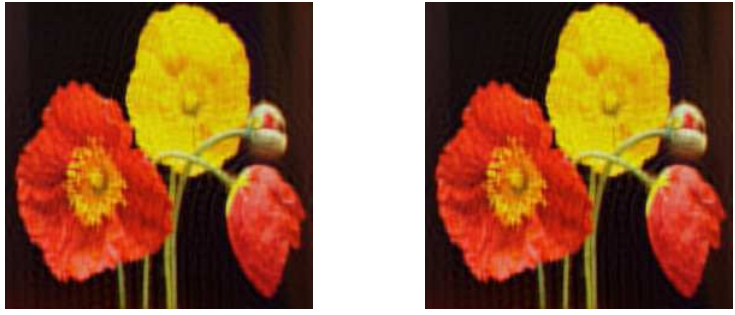}
\caption{Example 1: Restored image by Algorithm 4 (left), and restored image by Algorithm 2 
(right).}\label{fig2}
\end{center}
\end{figure}
\subsection{Example 2}
In this example, we evaluate the effectiveness of Algorithms \ref{alg11} and \ref{TG-GK} when applied to the restoration of a color video defined by a sequence of RGB images. Video restoration is the problem of restoring a sequence of $k$ color images (frames). Each frame is represented by a tensor of $N \times N\times3$ pixels. In the present example, we are interested in restoring 10 consecutive frames of a contaminated video. We consider the xylophone video from MATLAB. The video clip is in MP4 format with each frame having $240 \times 240$ pixels. The (unknown) blur- and noise-free frames are stored in the tensor $\widehat{\mathcal{C}} \in \mathbb{R}^{N \times N\times3\times10}$. These frames are blurred by a blurring tensor $\mathcal A$ of the same kind and with the same parameters as in the previous example. Figure \ref{frame5ob} shows the 5th exact (original) frame and the contaminated version, which is to be restored. Blurred and noisy frames are generated by $\widehat{\mathcal{C}}=\mathcal{A}\ast_N\widehat{\mathcal{X}}$ where the tensor $\mathcal{E}$ represents white Gaussian noise of levels $\nu=10^{-3}$ or $\nu=10^{-2}$. Table \ref{tab2} displays the performance of algorithms. For Algorithm \ref{alg11}, we have used as an input  $\mathcal{A}$, $\mathcal{C}$, $\mathcal{X}_0=\mathcal{O}$, $\varepsilon=10^{-6}$, $m=10$ and $\textbf{Maxit}=10$. For the ten outer iterations,  minimizing the GCV function  produces  $\mu_{10}=9.44 \times 10^{-4}$. Using Algorithm \ref{TG-GK},  the discrepancy principle with $\eta=1.1$ have been satisfied after $\ell=59$ steps of the Einstein tensor GGKB method, producing a regularization parameter given by $\mu_\ell=1.06\times10^{-4}$. The restorations obtained with Algorithms \ref{alg11}  and \ref{TG-GK} are shown on the left-hand and right-hand sides of Figure \ref{frame5r}, respectively.
\begin{table}[htbp]
	\caption{Results for Example 2.}\label{tab2}
	\begin{center}
		\begin{tabular}{lcccc}
\hline Noise level & Method & PSNR & Relative error & CPU-time (second) \\
\hline {$10^{-3}$}& Algorithm 2 & 15.48 & $6.84 \times 10^{-2}$ & 38.93 \\  & Algorithm 4 & 19.24 & $4.43 \times 10^{-2}$ & 27.37 \\
\hline  {$10^{-2}$} & Algorithm 2 & 14.50 & $7.65 \times 10^{-2}$ & 15.55 \\
& Algorithm 4 & 15.13 & $7.11 \times 10^{-2}$ & 4.40 \\
\hline
\end{tabular}
	\end{center}
\end{table}
	\begin{figure}
		\begin{center}
			\includegraphics[width=5in]{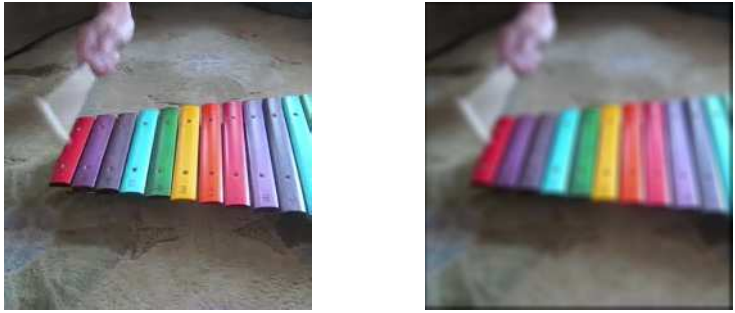}
			\caption{Frame no. 5: Original frame  (left), blurred and noisy frame  (right).
				}\label{frame5ob}
		\end{center}
	\end{figure}
\begin{figure}
	\begin{center}
		\includegraphics[width=5in]{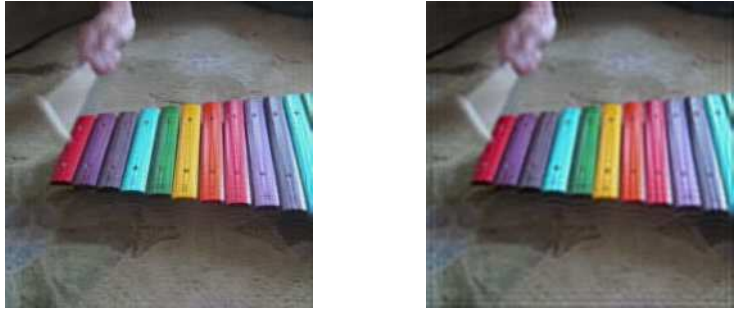}
		\caption{Frame no. 5: Restored frame by Algorithm 4 (left), and restored frame by Algorithm 2 
			(right).}\label{frame5r}
	\end{center}
\end{figure}
 	
 \section{Conclusion}\label{sec5}
We extended the GMRES and Gloub--Kahan bidiagonalization  in conjunction
of Tikhonov regularization for solving (possibly) ill-conditioned multilinear systems via Einstein product with perturbed right-hand side. Numerical experiments were disclosed for image and video processing to demonstrate
the feasibility of proposed iterative algorithms.

 \end{document}